\documentclass{tran-l}
\usepackage{amssymb}
\usepackage{mathrsfs}
\usepackage{tikz-cd}
\usepackage{hyperref}

\newtheorem{theorem}{Theorem}[section]

\theoremstyle{definition}
\newtheorem{definition}[theorem]{Definition}

\newtheorem{example}[theorem]{Example}
\newtheorem{corollary}[theorem]{Corollary}

\theoremstyle{remark}

\numberwithin{equation}{section}

\newcommand{\lPM}[1]{#1{\operatorname{\mathbf{-PM}}}}

\begin{document}
 
\title[Spectral Fundamental Group]{Spectral Homotopy and the Spectral Fundamental Group}

\author{Biswajit Mitra}
\address{Department of Mathematics, The University of Burdwan, Burdwan Rajbati, West Bengal 713104}
\email{bmitra@math.buruniv.ac.in}

\author{Sourav Koner}
\address{Department of Mathematics, The University of Burdwan, Burdwan Rajbati, West Bengal 713104}
\email{harakrishnaranusourav@gmail.com}

\subjclass[2020]{Primary 54H13, 13J99; Secondary 55Q05, 13A15, 54C35}
\keywords {spectral fundamental group; pm-rings; spectral homotopy; induced spectral maps; maximal spectra; rings of continuous functions}

\begin{abstract}
In this paper, we introduce an algebraic-topological invariant for commutative pm-rings, termed the spectral fundamental group, which is denoted by $\pi_{k}^{alg}(A)$. This group is defined via homotopy classes of loops within the space of induced spectral maps, which are generated by the $k$-algebra endomorphism monoid of the ring. We establish foundational properties of this invariant, proving that $\pi_{k}^{alg}(A)$ is an abelian group that naturally respects direct products and admits natural morphisms with respect to fully invariant subrings. Further, we establish an explicit isomorphism between the spectral fundamental group of certain continuous function rings and the classical fundamental group of their associated topological mapping spaces. Finally, utilizing a generalized dual number construction, we present an explicit example of a pm-ring that cannot be embedded into any function ring over a field of characteristic zero, yet possesses a nontrivial spectral fundamental group. This demonstrates that $\pi_{k}^{alg}(A)$ captures homotopical dynamics that are intrinsically algebraic.
\end{abstract}

\maketitle

\section{Introduction}

For a commutative ring $A$ with unity, the prime spectrum $\operatorname{Spec}(A)$ and the maximal spectrum $\operatorname{Max}(A)$ provide a fundamental bridge between algebra and topology. In particular, for pm-rings—rings in which every prime ideal is contained in a unique maximal ideal—the canonical retraction

$$
\mu : \mathrm{Spec}(A) \to \mathrm{Max}(A)
$$ 
defined by assigning to each prime ideal the unique maximal ideal containing it, induces a natural and well-behaved topological structure on $Max(A)$
This provides a convenient framework for studying the interplay between algebraic properties of the ring and the associated topological structure of its prime and maximal spectra.

A central theme in both commutative algebra and topology is the study of invariants that encode structural information. The goal of this paper is to introduce and study such an invariant. Specifically, we consider the space of self-maps of $\operatorname{Max}(A)$ that are induced by $k$-algebra endomorphisms of $A$, and we investigate its homotopical structure. This leads to the definition of a new invariant, which we call the spectral fundamental group, denoted by $\pi_{k}^{alg}(A)$. By construction, this group encodes homotopy classes of loops in the space of induced spectral maps, thereby measuring the existence of nontrivial continuous one-parameter families of algebra endomorphisms.

This perspective places our work at the intersection of commutative algebra and homotopy theory. The construction is closely related to classical topology: in the case of rings of continuous functions, we show that $\pi_{k}^{alg}(A)$ recovers the fundamental group of a mapping space. More precisely, for a compact Hausdorff space $X$, we obtain a natural isomorphism
$$
\pi_{k}^{alg}(C(X)) \cong \pi_1(C(X,X), id_{X}),
$$
thereby situating our invariant within the well-studied topology of function spaces. Despite this connection, the spectral fundamental group is not merely a reformulation of known topological invariants. A key feature of our construction is that it is defined purely in terms of algebraic endomorphisms, and therefore remains meaningful even for rings that do not arise from classical function spaces. To demonstrate this, we construct explicit examples of pm-rings that cannot be embedded into any function ring over a field of characteristic zero, yet admit a nontrivial spectral fundamental group. These examples show that $\pi_{k}^{alg}(A)$ can detect genuinely algebraic phenomena that are not accessible through traditional topological models. The main contributions of this paper are as follows:

We define the spectral fundamental group $\pi_{k}^{alg}(A)$ using homotopy classes of loops in the space of induced spectral maps. We prove that this invariant carries a natural abelian group structure and behaves well with respect to standard constructions, including direct products and fully invariant retracts. We establish a precise connection with classical topology by identifying $\pi_{k}^{alg}(C(X))$ with the fundamental group of the mapping space $C(X,X)$. We construct explicit examples of pm-rings exhibiting nontrivial spectral fundamental groups beyond the realm of function rings, thereby demonstrating the intrinsically algebraic nature of the invariant.

The paper is organized as follows. In Section $2$, we review the necessary background on pm-rings and topological function spaces. In Section $3$, we introduce endomorphism monoids, induced spectral maps, and the definition of the spectral fundamental group. Section $4$ develops the general structure theory of this invariant, including product behavior. Finally, Section $5$ is devoted to constructions that illustrate the scope of the theory beyond classical function spaces. Through this work, we aim to initiate a systematic study of homotopical invariants arising from algebraic endomorphisms, and to highlight a new avenue for interaction between commutative algebra and topology.

\section{Preliminaries}\label{preli}

For a commutative ring $A$ with unity, let $\mathrm{Spec}(A)$ and $\mathrm{Max}(A)$ denote the prime and maximal spectra of $A$, respectively. Both $\mathrm{Spec}(A)$ and $\mathrm{Max}(A)$ are endowed with the Zariski topology. A commutative ring with unity is said to be a pm-ring if every prime ideal is contained in a unique maximal ideal. The result below is well-known and can be found, for instance, in \cite{MO71} and \cite{C82}.

\begin{theorem}\label{m1}
The following are equivalent. \\
$(a)$ $A$ is a pm-ring. \\
$(b)$ $\mathrm{Max}(A)$ is a retract of $\mathrm{Spec}(A)$. \\
$(c)$ Whenever $a + b = 1$ in $A$, there exist $r, s \in A$ such that $(1 - ar)(1 - bs) = 0$. 
\end{theorem}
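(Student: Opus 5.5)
I plan to prove the cycle $(a)\Rightarrow(c)\Rightarrow(a)$ algebraically, and then prove $(a)\Leftrightarrow(b)$ separately. For (b), the natural candidate retraction is the map $\mu$ from the introduction.

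\emph{$(a)\Rightarrow(c)$.} Suppose $a+b=1$. Consider the multiplicative sets $S=1+aA$ and $T=1+bA$, and also the product set $ST$. If $0\notin ST$, then since $ST$ is multiplicatively closed, some prime $P$ is disjoint from $ST$, and hence from both $S$ and $T$. Because $P\cap(1+aA)=\emptyset$, the ideal $P+aA$ is proper, so $P$ and $a$ lie in a common maximal ideal $M$. By the same argument, $P$ and $b$ lie in a common maximal ideal $N$. Since $a+b=1$, we get $M\neq N$, which contradicts (a). Hence $0\in ST$, and this says exactly that $(1-ar)(1-bs)=0$ for suitable $r,s$ (absorbing signs into $r,s$).

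\emph{$(c)\Rightarrow(a)$.} Suppose a prime $P$ lies in two distinct maximal ideals $M,N$. Choose $a\in M$ and $b\in N$ with $a+b=1$. By (c), $(1-ar)(1-bs)=0\in P$, so one of the factors, say $1-ar$, lies in $P\subseteq M$. But $ar\in M$, so $1\in M$, which is absurd.

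\emph{$(a)\Leftrightarrow(b)$.} This is the main obstacle. For $(a)\Rightarrow(b)$, define $\mu(P)$ as the unique maximal ideal containing $P$; clearly $\mu$ restricts to the identity on $\mathrm{Max}(A)$. It remains to show that $\mu$ is continuous, i.e.\ that $\mu^{-1}(F)$ is closed for every closed $F=V(I)\cap\mathrm{Max}(A)$.

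I would first show that $\mu^{-1}(F)$ is closed under specialization. If $P\subseteq Q$, then $\mu(P)=\mu(Q)$, and if $Q\supseteq P$ with $\mu(P)\in F$, then $\mu(Q)=\mu(P)$. So membership depends only on the unique maximal ideal above. The harder part is showing $\mu^{-1}(F)$ is also closed under generization, i.e.\ under limits of primes. For this I would use the standard argument. Let $P\notin\mu^{-1}(F)$ and $M=\mu(P)$. Then $M\notin F$, and also $F\cup\{M\}$ should be separated. Using (c), I would produce an $s\notin P$ such that $D(s)$ misses $\mu^{-1}(F)$. Concretely, for each $M'\in F$ we have $P\not\subseteq M'$, so $P+I'=A$ for some relevant ideal. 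Then compactness of the closed set $F$ (closed subsets of $\mathrm{Spec}$ are quasi-compact) lets us reduce to finitely many relations $a+b=1$ with $a\in P$-related ideals. Applying (c) to each relation yields elements $1-bs\notin P$ that kill every prime lying under $F$. Their product is the required $s$.

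For $(b)\Rightarrow(a)$, suppose a retraction $\rho$ exists. If $P\subseteq M\cap N$ with $M\neq N$, then continuity of $\rho$ shows that $\rho$ preserves specialization, since specialization is $P\in\overline{\{Q\}}$. This forces $M=\rho(M)\in\overline{\{\rho(P)\}}\cap\mathrm{Max}$, so $\rho(P)\subseteq M$, and likewise $\rho(P)\subseteq N$. Because $\rho(P)$ is maximal, this gives $M=\rho(P)=N$, a contradiction.
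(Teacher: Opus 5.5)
\paragraph{Verdict.} Your arguments for $(a)\Rightarrow(c)$, $(c)\Rightarrow(a)$ and $(b)\Rightarrow(a)$ are correct and complete. The paper gives no proof of this theorem; it quotes it from De Marco--Orsatti and Contessa. The gap is in $(a)\Rightarrow(b)$. You rightly identify the continuity of $\mu$ as the real content, but you never establish it.

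\paragraph{Specialization does not help.} Since $P\subseteq Q$ forces $\mu(P)=\mu(Q)$, every set $\mu^{-1}(F)$ is automatically stable under both specialization and generization, and neither property makes it closed. What you need is a basic neighbourhood $D(t)$ of each $P\notin\mu^{-1}(F)$ that misses $\mu^{-1}(F)$.

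\paragraph{The compactness sketch does not produce such a $t$ as written.} You pass to finitely many relations $a+b=1$ (with $a\in P$, $b\in M'$) \emph{before} applying (c). So the covering sets cannot be the ones the argument needs. With the neighbourhoods available at that stage, such as $D(a)\ni M'$, the conclusion is false. Take $C([0,1])$ with $P=M_0$ and $F=\{M_y: 1/2\le y\le 1\}$. The single relation $x+(1-x)=1$ already gives $F\subseteq D(x)$. Yet (c) can be satisfied with $1-xr$ vanishing exactly on $[0.9,1]$, and then $D(1-xr)$ contains both $P$ and $M_{0.6}\in F$. You also have the factors swapped: with $a\in P$, the factor guaranteed to lie outside $P$ is $1-ar$, not $1-bs$. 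The factor $1-bs$ is the one that must be used to cover $F$. A minor point: $F$ is closed in $\mathrm{Max}(A)$, not in $\mathrm{Spec}(A)$. It is still quasi-compact, because $\mathrm{Max}(A)$ is.

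\paragraph{Repairing the compactness route.} Reverse the order of the steps.
For each $M'\in F$, write $1=p'+m'$ with $p'\in P$ and $m'\in M'$.
Apply (c) to get $(1-p'r')(1-m's')=0$.
Cover $F$ by the sets $D(1-m's')$, each containing $M'$.
Take a finite subcover indexed by $j$ and set $t=\prod_j(1-p_jr_j)$, which is not in $P$.
Every $Q\in D(t)$ contains all the elements $1-m_js_j$. Hence $\mu(Q)$ lies in none of the sets $D(1-m_js_j)$, so $\mu(Q)\notin F$.

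\paragraph{A shorter argument without compactness.} Write $F=V(I)\cap\mathrm{Max}(A)$. Then $\mu(Q)\in F$ if and only if $Q+I\neq A$, because any maximal ideal containing $Q+I$ must equal $\mu(Q)$. Now suppose $P+I=A$, and write $1=p+i$. Apply (c) once to get $(1-pr)(1-is)=0$. Then $t=1-pr\notin P$, and each $Q\in D(t)$ contains $1-is$, so $1\in Q+I$. Thus $D(t)$ is a neighbourhood of $P$ disjoint from $\mu^{-1}(F)$, and $\mu$ is continuous.
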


For a pm-ring $A$, it follows from part $(c)$ of Theorem \eqref{m1} that $\mathrm{Max}(A)$ is a Hausdorff space. Moreover, the map 
\begin{equation}\label{e1}
\mu : \mathrm{Spec}(A) \longrightarrow \mathrm{Max}(A),   
\end{equation}
which assigns to each prime ideal $P$ the unique maximal ideal
$\mu(P)$ containing $P$, is the unique retraction of $\mathrm{Spec}(A)$ onto $\mathrm{Max}(A)$. In what follows, we consider spaces of continuous maps involving products of topological spaces and their associated function spaces. In particular, we make use of a fundamental result describing the relationship between continuous maps on product spaces and continuous maps into function spaces. For the reader’s convenience, we recall this result below, which can be found, for example, in \cite{B64}.

\begin{theorem}[Exponential Law]\label{m2}
Let $X$ be a locally compact Hausdorff space and let $Y$ and $Z$ be topological spaces. Then
\[
C(X \times Z, Y) \cong C\bigl(Z, C(X, Y)\bigr),
\]
where all function spaces are endowed with the compact-open topology.
\end{theorem}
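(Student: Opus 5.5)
The plan is to construct the two mutually inverse bijections explicitly, using the adjunction between the product and the compact-open topology. For a continuous $f : X \times Z \to Y$, set $\Phi(f) = \hat f$ with $\hat f(z) = f(\cdot, z)$. For a continuous $g : Z \to C(X,Y)$, set $\Psi(g)(x,z) = g(z)(x)$. Formally $\Phi$ and $\Psi$ are inverse to one another, so the whole task is to check that each lands in the space of continuous maps. If the isomorphism is also meant as a homeomorphism of compact-open spaces, I will check continuity of $\Phi$ and $\Psi$ at the end.

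First I show that $\hat f(z)$ is continuous for each fixed $z$. This holds because it is the composite $x \mapsto (x,z) \mapsto f(x,z)$.

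Next I show that $\hat f$ is continuous into $C(X,Y)$, which amounts to a tube-lemma argument. Take a subbasic open set $V(K,U) = \{h : h(K) \subseteq U\}$ with $K \subseteq X$ compact and $U \subseteq Y$ open, and suppose $\hat f(z_0) \in V(K,U)$. Then $K \times \{z_0\} \subseteq f^{-1}(U)$, and $f^{-1}(U)$ is open. Since $K$ is compact, the tube lemma gives an open $W \ni z_0$ with $K \times W \subseteq f^{-1}(U)$, so $\hat f(W) \subseteq V(K,U)$. This direction needs no hypothesis on $X$.

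The main obstacle is the converse: showing $\Psi(g)$ is continuous. Here local compactness and the Hausdorff property of $X$ are essential, and the key point is continuity of the evaluation map $e : C(X,Y) \times X \to Y$. I prove that first. Suppose $h(x_0) \in U$ with $U$ open. By continuity of $h$ and local compactness (with Hausdorffness giving regularity-type neighborhoods), choose an open $N \ni x_0$ whose closure $\overline{N}$ is compact and satisfies $h(\overline{N}) \subseteq U$. Then $V(\overline{N},U) \times N$ is a neighborhood of $(h,x_0)$ that $e$ maps into $U$. With this in hand, $\Psi(g) = e \circ (g \times \mathrm{id}_X)$ composed with the swap $(x,z) \mapsto (z,x)$, so it is continuous.

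Finally, for the homeomorphism statement I check $\Phi$ and $\Psi$ on subbasic sets of the form $V(K \times L, U)$ and $V(L, V(K,U))$, using the tube lemma again together with the fact that products $K \times L$ of compact sets generate the compact-open topology on $C(X\times Z,Y)$. If the intended meaning is only a bijection natural in $Z$, which is how it will be used for homotopies with $Z = [0,1]$ or $Z = X \times [0,1]$, this last step can be omitted.
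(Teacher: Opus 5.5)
Your proof of the bijection is correct and is the standard one. The generalized tube lemma gives continuity of $\hat f$ with no hypothesis on $X$, and continuity of evaluation for locally compact Hausdorff $X$ gives the converse. The paper gives no proof of this statement; it quotes it from the literature. It only uses the bijection, with $X=\mathrm{Max}(A)$ compact Hausdorff and $Z=[0,1]$, to identify paths in $X_k^A$ with maps $\mathrm{Max}(A)\times[0,1]\to\mathrm{Max}(A)$. For that use your argument is complete.

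Your final paragraph, on the homeomorphism, has a genuine gap, and for arbitrary $Z$ it cannot be closed. $\Phi$ and $\Psi$ interchange $V(K\times L,U)$ and $V(L,V(K,U))$ tautologically, so everything rests on two subbasis claims:
\begin{enumerate}
\item the sets $V(K\times L,U)$ generate the topology of $C(X\times Z,Y)$;
\item the sets $V(L,V(K,U))$ generate the topology of $C(Z,C(X,Y))$, whose subbasic open sets are $V(L,\mathcal W)$ with $\mathcal W$ an \emph{arbitrary} open subset of $C(X,Y)$. You do not address this one.
\end{enumerate}
Both are proved by shrinking a finite open cover of a compact subset of $Z$ (resp.\ of $X\times Z$) to a finite cover by compact pieces. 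That uses normality of compact Hausdorff spaces, i.e.\ Hausdorffness of $Z$; the tube lemma is not the relevant tool.

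Without Hausdorffness the claim is false. Take $X=\{1,2\}$ discrete, $Y=\{0,1\}$ the Sierpi\'nski space with $\{1\}$ open, and $Z=\mathbb{Q}\cup\{\infty\}$ the one-point compactification of $\mathbb{Q}$, which is compact and $T_1$ but not Hausdorff.
\begin{itemize}
\item A map $X\times Z\to Y$ is a pair $(O_1,O_2)$ of open subsets of $Z$.
\item Basic open sets of $C(X\times Z,Y)$ have the form $\{(O_1',O_2') : A\subseteq O_1',\ B\subseteq O_2'\}$ with $A,B\subseteq Z$ compact.
\item For the open set $\mathcal W=\{h : h(1)=1\}\cup\{h : h(2)=1\}$ of $C(X,Y)$, one gets $\Phi^{-1}(V(Z,\mathcal W))=\{(O_1,O_2) : O_1\cup O_2=Z\}$.
\end{itemize}
Now let $O_1=Z\setminus\{0\}$ and $O_2=(-1,1)\cap\mathbb{Q}$. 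There are no compact $A\subseteq O_1$, $B\subseteq O_2$ with $A\cup B=Z$. Indeed, $\infty\in A$, so $A\cap\mathbb{Q}$ is closed in $\mathbb{Q}$ and misses $0$. This forces $B$ to contain a neighbourhood of $0$ in $\mathbb{Q}$, impossible since compact subsets of $\mathbb{Q}$ have empty interior. Hence every basic neighbourhood of $(O_1,O_2)$ leaves out some $z\notin A\cup B$. By $T_1$ it contains $(Z\setminus\{z\},Z\setminus\{z\})$, which lies outside $\Phi^{-1}(V(Z,\mathcal W))$. So $\Phi$ is not continuous.

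The homeomorphism version therefore requires $Z$ Hausdorff. Under that hypothesis, the shrinking argument for claims 1 and 2 completes your last step.
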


\section{Endomorphisms and Induced Spectral Maps}

Throughout this paper, we assume $k$ is one of the following rings: $\mathbb{Z}$ or $\mathbb{R}$. Let $\lPM{k}$ denote the category whose objects are pm-rings equipped with a $k$-algebra structure, and whose morphisms are unital $k$-algebra homomorphisms. For $A \in \mathrm{Ob}(\lPM{k})$, we write
$$
E_k(A) := \mathrm{Hom}_{\lPM{k}}(A, A)
$$
for its endomorphism monoid, that is, under composition, $E_{k}(A)$ is a monoid with identity $id_A$. For $f \in E_{k}(A)$, the map $f_{\ast} : \mathrm{Spec}(A) \to \mathrm{Spec}(A)$, defined by $P \mapsto f^{-1}(P)$, is continuous. Consequently, the restriction of $f_{\ast}$ to $\mathrm{Max}(A)$ is also continuous. By  \eqref{e1}, since $\mu$ is the unique retraction, it follows immediately that $\mu_f := \mu \circ f_{\ast}\big|_{\mathrm{Max}(A)}$ is continuous. Adopting this terminology, we make the following definition.

\begin{definition}[Induced Spectral Maps]
For $A \in \mathrm{Ob}(\lPM{k})$, define
$$
X_{k}^{A}:= \{ \mu_f \mid f \in E_{k}(A) \} \quad\text{and}\quad Y:= C(\mathrm{Max}(A), \mathrm{Max}(A)).
$$
The elements of $X_{k}^{A}$ are called induced spectral maps. We endow the space $Y$ with the compact-open topology and $X_{k}^{A}$ with the induced subspace topology. 
\end{definition}

\subsection{Homotopy of Spectral Loops}

\begin{definition}[Homotopy of Induced Spectral Maps]
Two induced spectral maps $\mu_f$ and $\mu_g$ in $X_{k}^{A}$ are said to be homotopic, written $\mu_{f} \simeq \mu_{g}$, if there exists a continuous path 
$$
\alpha: [0,1] \rightarrow X_{k}^{A} 
$$ 
such that $\alpha(0) = \mu_f$ and $\alpha(1) = \mu_g$.
\end{definition}

By Theorem \eqref{m2}, this amounts to stating that there exists a continuous map 
$\widetilde{\alpha}: \mathrm{Max}(A) \times [0,1]
\rightarrow \mathrm{Max}(A)$ 
such that
$$
\widetilde{\alpha}(-, t) = \mu_{h_{t}} \text{ for some } h_{t} \in E_{k}(A) \text{ with } h_{0} = f \text{ and } h_{1} = g.
$$
In this case, we also say that $f$ and $g$ are spectrally homotopic.

\begin{definition}[Spectral Loop at $\mu_{id_{A}}$]
A spectral loop at $\mu_{id_{A}}$ is a continuous map
$$
\gamma : [0, 1] \rightarrow X_{k}^{A}
$$
such that $\gamma(0) = \gamma(1) = \mu_{id_{A}}$. 
\end{definition}

\begin{definition}[Homotopy of Spectral Loops at $\mu_{id_{A}}$]
Two spectral loops $\gamma$ and $\eta$ are said to be \emph{homotopic}
if there exists a continuous map
$$
F : [0,1] \times [0,1] \rightarrow X_{k}^{A}
$$
such that
\begin{enumerate}
    \item $F(-, 0) = \gamma$ and $F(-, 1) = \eta$,
    \item $F(0, s) = F(1,s) = \mu_{id_{A}}$
    for all $s \in [0,1]$.
\end{enumerate}
\end{definition}

\subsection{Spectral Fundamental Group}

\begin{definition}\label{de5}
The spectral fundamental group of $A$, written $\pi_{k}^{alg}(A)$, is the fundamental group $\pi_{1}(X_{k}^{A}, \mu_{id_{A}})$, that is,
$$
\pi_{k}^{alg}(A)
:=
\{\text{spectral loops at $\mu_{id_{A}}$}\}\big/\text{homotopy}.
$$
The group operation is induced by concatenation of loops, and the identity
element is the constant loop at $\mu_{id_{A}}$.
\end{definition}

\section{General Results}

\begin{theorem}\label{m3}
$\pi_{k}^{alg}(A)$ is an abelian group.
\end{theorem}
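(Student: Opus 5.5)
The plan is to show that $X_{k}^{A}$, based at $\mu_{id_A}$, is a topological monoid under composition of maps. The classical Eckmann--Hilton argument then shows that the fundamental group of any topological monoid at its identity is abelian.

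\emph{Step 1: closure under composition.} I will check that $\mu_f\circ\mu_g=\mu_{g\circ f}$ for $f,g\in E_k(A)$. First, $(g\circ f)_\ast=f_\ast\circ g_\ast$ on $\mathrm{Spec}(A)$. Fix $M\in\mathrm{Max}(A)$ and put $N=\mu(g_\ast(M))$, so that $g_\ast(M)\subseteq N$. Then $f_\ast(g_\ast(M))\subseteq f_\ast(N)\subseteq \mu(f_\ast(N))$. The prime $f_\ast g_\ast(M)$ lies in a unique maximal ideal by the pm-property (Theorem \ref{m1}). Hence $\mu(f_\ast g_\ast M)=\mu(f_\ast N)$, which is exactly the claimed identity. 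Consequently $X_{k}^{A}$ is closed under composition, and $\mu_{id_A}=id_{\mathrm{Max}(A)}$ is a two-sided unit.

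\emph{Step 2: continuity of composition.} Here I would use that $\mathrm{Max}(A)$ is always quasi-compact, and that it is Hausdorff for a pm-ring by Theorem \ref{m1}(c). Thus $\mathrm{Max}(A)$ is compact Hausdorff, and in particular locally compact Hausdorff. For such a space, the composition map $Y\times Y\to Y$ is continuous in the compact-open topology. This can be proved directly, or obtained from the continuity of evaluation $Y\times \mathrm{Max}(A)\to\mathrm{Max}(A)$ together with the exponential law, Theorem \ref{m2}. Restricting to $X_{k}^{A}$ shows that it is a topological monoid with unit $\mu_{id_A}$. I expect this step to be the only real obstacle, since it is where the pm-hypothesis enters topologically.

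\emph{Step 3: Eckmann--Hilton.} Let $\gamma,\eta$ be spectral loops and define the pointwise product $(\gamma\cdot\eta)(t)=\gamma(t)\circ\eta(t)$. By Steps 1--2 this is again a spectral loop at $\mu_{id_A}$.

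Let $\gamma*\eta$ denote concatenation. Put $\gamma'=\gamma*c$ and $\eta'=c*\eta$, where $c$ is the constant loop. Since $c(t)=\mu_{id_A}$ is the unit, $\gamma'\cdot\eta'=\gamma*\eta$ pointwise.

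The standard reparametrization homotopies $\gamma'\simeq\gamma$ and $\eta'\simeq\eta$ take values in $X_{k}^{A}$ and are rel endpoints. Taking their pointwise product gives $\gamma*\eta\simeq\gamma\cdot\eta$. Symmetrically, using $c*\gamma$ and $\eta*c$, we get $\eta*\gamma\simeq\gamma\cdot\eta$. Therefore $[\gamma][\eta]=[\eta][\gamma]$.

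Together with the group structure already given in Definition \ref{de5}, this proves that $\pi_{k}^{alg}(A)$ is abelian.
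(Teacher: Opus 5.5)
Your proposal is correct and follows the same route as the paper: $X_{k}^{A}$ is a topological monoid because composition is continuous on $C(\mathrm{Max}(A),\mathrm{Max}(A))$ for the compact Hausdorff space $\mathrm{Max}(A)$, and the Eckmann--Hilton argument then makes $\pi_{k}^{alg}(A)$ abelian. You also supply two details the paper leaves implicit: the closure identity $\mu_f\circ\mu_g=\mu_{g\circ f}$, obtained from the uniqueness of the maximal ideal over a prime, and the explicit Eckmann--Hilton homotopies.
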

\begin{proof}
The space $X_{k}^{A}$ is a submonoid of $Y$ under composition. Since $\mathrm{Max}(A)$ is a compact Hausdorff space, composition is continuous in the compact-open topology, so $X_{k}^{A}$ is a topological monoid. Thus, $\pi_{k}^{alg}(A)$ is the fundamental group of this H-space based at its identity element $\mu_{id_{A}}$. By the Eckmann–Hilton argument, the fundamental group of any H-space at its identity is abelian.
\end{proof}

Recall that a $k$-subalgebra $D$ of $B$ is said to be fully invariant in $B$ if $f(D) \subseteq D$ for every $f \in E_{k}(B)$. Also, $D$ is said to be a retract of $B$ if there exists a morphism $r: B \rightarrow D$ such that $r|_{D}$ is the identity map on $D$.

\begin{theorem}\label{m4}
Let $h: A \rightarrow B$ be a morphism with the kernel $I$, and suppose that $h(A)$ is fully invariant in $B$. Then there exists a natural morphism $h^{\ast}$ from $\pi_{k}^{alg}(B)$ to $\pi_{k}^{alg}(A/I)$. Moreover, if $h$ is an isomorphism, then so is $h^{\ast}$.
\end{theorem}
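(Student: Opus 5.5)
The plan is to factor $h$ through its image and transport loops along a map of maximal spectra. Put $D := h(A)\subseteq B$. Then $h$ induces a $k$-algebra isomorphism $\bar h : A/I \to D$. Since a quotient of a pm-ring is a pm-ring, both $A/I$ and $D$ are objects of $\lPM{k}$, and $\bar h$ induces a homeomorphism $\mathrm{Max}(D)\cong \mathrm{Max}(A/I)$ that conjugates $X_k^{D}$ onto $X_k^{A/I}$. It therefore suffices to build a natural homomorphism $\pi_k^{alg}(B)\to \pi_k^{alg}(D)$ and compose it with the isomorphism induced by $\bar h$. Full invariance of $D$ gives a restriction map $\rho : E_k(B)\to E_k(D)$, $f\mapsto f|_D$. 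This map is a monoid homomorphism with $\rho(id_B)=id_D$.

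Next, connect the two spectra by $\lambda := \mu_D\circ c\big|_{\mathrm{Max}(B)} : \mathrm{Max}(B)\to \mathrm{Max}(D)$, where $c(P)=P\cap D$ is contraction; $\lambda$ is continuous. For $P\in\mathrm{Spec}(B)$ the identity $(f|_D)_\ast(P\cap D)=f^{-1}(P)\cap D$ holds. Moreover, $P\subseteq \mu_B(P)$ forces $\mu_D(P\cap D)=\mu_D(\mu_B(P)\cap D)$, because the unique maximal ideal over a prime is inherited by smaller primes. Together these give the intertwining relation
\[
\lambda\circ \mu_f \;=\; \mu_{f|_D}\circ \lambda \qquad (f\in E_k(B)).
\]
I would then define $\Phi : X_k^{B}\to X_k^{D}$ by $\Phi(\mu_f)=\mu_{f|_D}$, so that $\Phi(\mu_{id_B})=\mu_{id_D}$. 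A loop $\gamma$ is sent to $\Phi\circ\gamma$, and homotopies of loops are sent to homotopies of loops. Once $\Phi$ is shown to be well defined and continuous, functoriality of $\pi_1$ yields the homomorphism $h^\ast$. Naturality is the statement that the construction commutes with composites of such $h$, which follows directly from the formula for $\rho$ and for $\lambda$.

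The main obstacle is showing that $\Phi$ is well defined and continuous, since a priori $\mu_f=\mu_g$ need not force $\mu_{f|_D}=\mu_{g|_D}$. By the intertwining relation, these two maps agree on $\lambda(\mathrm{Max}(B))$. So the first thing I would try is to prove that $\lambda$ is surjective, i.e.\ that every maximal ideal $M$ of $D$ lies under some prime of $B$. Since $D$ is pm, it suffices to find any prime of $B$ whose contraction lies in $M$, and for this I would aim to show $MB\neq B$. Once $\lambda$ is surjective, $\mathrm{Max}(B)$ compact and $\mathrm{Max}(D)$ Hausdorff make $\lambda$ a quotient map. Well-definedness then follows, and continuity of $\Phi$ in the compact-open topology comes from the exponential law (Theorem \ref{m2}). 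Concretely, a path $t\mapsto\mu_{f_t}$ gives a continuous map $\mathrm{Max}(B)\times[0,1]\to\mathrm{Max}(B)$, and composing with $\lambda$ and descending along the quotient map $\lambda\times id$ yields $\mathrm{Max}(D)\times[0,1]\to\mathrm{Max}(D)$. If surjectivity fails in general, the fallback is to work on the closed image $\lambda(\mathrm{Max}(B))$ and to extract the needed extension from full invariance.

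Finally, suppose $h$ is an isomorphism. Then $I=0$, $D=B$, $\lambda=id$ and $\rho=id$, so $h^\ast$ is simply conjugation by the homeomorphism $\mathrm{Max}(A)\cong\mathrm{Max}(B)$ induced by $h$. Applying the same construction to $h^{-1}$, which trivially has fully invariant image $A$, produces an inverse by naturality. Hence $h^\ast$ is an isomorphism.
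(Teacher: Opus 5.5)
Your architecture matches the paper's. Your $\lambda$ is the paper's map $\delta$, and your relation $\lambda\circ\mu_f=\mu_{f|_D}\circ\lambda$ is its commutative square. Everything downstream depends on $\lambda$ being surjective: well-definedness of $\Phi$, and continuity, which the paper gets from $\Phi^{-1}[K,U]=[\lambda^{-1}(K),\lambda^{-1}(U)]\cap X_k^B$. That is exactly the step you leave open, and the route you propose for it fails. Showing $MB\neq B$ amounts to lying over for $M$ itself, which is false in general even under the theorem's hypotheses. Take $k=\mathbb{Z}$ and $h$ the inclusion $\mathbb{Z}_{(p)}\hookrightarrow\mathbb{Q}$. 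Both rings are pm, and $E_{\mathbb{Z}}(\mathbb{Q})=\{id_{\mathbb{Q}}\}$, so the image is fully invariant. Yet $M=p\mathbb{Z}_{(p)}$ satisfies $MB=\mathbb{Q}$ because $p$ is a unit. Here $\lambda$ is still surjective, since $\lambda(0)=\mu_D(0)=M$, but not for the reason you give. Your fallback of working on the closed image $\lambda(\mathrm{Max}(B))$ is not an argument. Off that image the intertwining relation says nothing, so $\Phi$ need not even be well defined.

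The missing lemma is that $\lambda$ is always surjective when $D\subseteq B$ and $D$ is pm. The right target is a prime of $B$ lying over a \emph{minimal} prime under $M$, not over $M$ itself.
\begin{enumerate}
\item Given $M\in\mathrm{Max}(D)$, pick a minimal prime $\mathfrak p\subseteq M$ and set $S=D\setminus\mathfrak p$.
\item Localization is exact, so $D_{\mathfrak p}\hookrightarrow S^{-1}B$, and hence $S^{-1}B\neq 0$.
\item Any prime of $S^{-1}B$ contracts to a prime of $D_{\mathfrak p}$, and the only such prime is $\mathfrak pD_{\mathfrak p}$. This gives $Q\in\mathrm{Spec}(B)$ with $Q\cap D=\mathfrak p\subseteq M$.
\item For any maximal $N\supseteq Q$, the pm property of $D$ gives $\lambda(N)=M$.
\end{enumerate}
There is also a topological proof. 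The image of $\mathrm{Spec}(B)\to\mathrm{Spec}(D)$ is dense because $D\to B$ is injective. Hence $\lambda(\mathrm{Max}(B))=\mu_D(c(\mathrm{Spec}(B)))$ is dense in $\mathrm{Max}(D)$. It is also compact in the Hausdorff space $\mathrm{Max}(D)$, so it is closed and therefore everything.

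The paper simply cites Theorem 1.6 of De Marco--Orsatti for this surjectivity. Once you supply the lemma, the rest of your argument goes through: surjectivity of $\lambda$ gives well-definedness, your quotient-map argument transports paths and homotopies, and the isomorphism case follows as you describe.
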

\begin{proof}
Let $h(A) = D$. Since $A/I \cong D$, let $g: A/I \rightarrow D$ be the isomorphism and let $\theta: \mathrm{Max}(D) \rightarrow \mathrm{Max}(A/I)$ be the homeomorphism, induced by $h$ and $g$ respectively. Let $\delta: \mathrm{Max}(B) \rightarrow \mathrm{Max}(D)$ be a map which sends every maximal ideal $M$ of $B$ into the unique maximal ideal of $D$ containing $M \cap D$. By theorem $(1.6)$ in \cite{MO71}, $\delta$ is a continuous surjection. If $\sigma: \mathrm{Max}(B) \rightarrow \mathrm{Max}(A/I)$ is the map such that $\sigma = (\theta \circ \delta)$, then it is easy to see that $\sigma$ is also a continuous surjection. Suppose now that $f \in E_{k}(B)$ and $M_{0} \in \mathrm{Max}(B)$. Let $\mu_{f}(M_{0}) = M_{1}$, $\delta(M_{1}) = M_{2}$, and $\theta(M_{2}) = M_{3}$. Also, let $\delta(M_{0}) = M_{4}$, $\theta(M_{4}) = M_{5}$, and $\mu_{g^{-1} \circ f|_{D} \circ g}(M_{5}) = M_{6}$. This implies the following
\begin{enumerate}
    \item $f^{-1}(M_{0}) \subseteq M_{1}$, $M_{1} \cap D \subseteq M_{2}$, and $g^{-1}(M_{2}) = M_{3}$;
    \item $M_{0} \cap D \subseteq M_{4}$ and $g^{-1}(f^{-1}(M_{4})) \subseteq M_{6}$.
\end{enumerate}
From $(1)$, we conclude that 
$$
g^{-1}(f^{-1}(M_{0})) \cap A/I \subseteq g^{-1}(M_{1}) \cap A/I \subseteq g^{-1}(M_{2}) \subseteq M_{3}.
$$
As $D$ is fully invariant in $B$, therefore from $(2)$, we obtain that 
$$
g^{-1}(f^{-1}(M_{0})) \cap A/I \subseteq g^{-1}(f^{-1}(M_{4})) \subseteq M_{6}.
$$
Since the homomorphic image of a pm-ring is a pm-ring, therefore, we conclude that $M_{3} = M_{6}$, that is, for each $f \in E_{k}(B)$, the rectangular diagram in \eqref{d1} commutes.
\begin{equation}
\begin{tikzcd}\label{d1}
	{\mathrm{Max}(B)} && {\mathrm{Max}(B)} \\
	\\
	{\mathrm{Max}(A/I)} && {\mathrm{Max}(A/I)}
	\arrow["{\mu_{f}}", from=1-1, to=1-3]
	\arrow["\sigma"', from=1-1, to=3-1]
	\arrow["\sigma", from=1-3, to=3-3]
	\arrow["{\mu_{(g^{-1} \circ f|_{D} \circ g)}}"', from=3-1, to=3-3]
\end{tikzcd}
\end{equation}
Define a map $\psi: X_{k}^{B} \rightarrow X_{k}^{A/I}$ by $\mu_{f} \mapsto \mu_{g^{-1} \circ f|_{D} \circ g}$. Let $[K, U]$ be a subbasic open set containing $\mu_{g^{-1} \circ f|_{D} \circ g}$.

Consider the subbasic open set $[\sigma^{-1}(K), \sigma^{-1}(U)]$. By the rectangular diagram in \eqref{d1}, we conclude that $\mu_{f} \in [\sigma^{-1}(K), \sigma^{-1}(U)]$, and that $\mu_{g^{-1} \circ l|_{D} \circ g} \in [K, U]$ for every $\mu_{l} \in [\sigma^{-1}(K), \sigma^{-1}(U)]$. This shows that $\psi$ is continuous. Finally, if we define the map $h^{\ast}: \pi_{k}^{alg}(B) \rightarrow \pi_{k}^{alg}(A/I)$ given by $[\mu_{f}] \mapsto [\mu_{g^{-1} \circ f|_{D} \circ g}]$, then the continuity of $\psi$ proves that $h^{\ast}$ is a well-defined map. Moreover, it is evident that $h^{\ast}$ is a group homomorphism.

If $h$ is an isomorphism, then by considering $h^{-1}: B \to A$ and by applying the first part, we get a group homomorphism $(h^{-1})^{\ast} : \pi_{k}^{alg}(A) \to \pi_{k}^{alg}(B)$,
which serves as the inverse of $h^{\ast}$. This completes the proof.
\end{proof}  

\begin{theorem}\label{maharajji}
If $D$ is a fully invariant retract of $B$, then the natural morphism $i^{\ast}$ from $\pi_{k}^{alg}(B)$ to $\pi_{k}^{alg}(D)$ is surjective. Moreover, $\pi_{k}^{alg}(B) \cong \pi_{k}^{alg}(D) \times \mathrm{ker}(i^{\ast})$.
\end{theorem}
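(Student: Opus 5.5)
I plan to apply Theorem \ref{m4} to the inclusion $i: D \hookrightarrow B$. Its kernel is zero and its image $D$ is fully invariant, so we get a homomorphism $i^{\ast}: \pi_{k}^{alg}(B) \to \pi_{k}^{alg}(D)$. It is induced by the continuous map $\psi: X_{k}^{B} \to X_{k}^{D}$, $\mu_f \mapsto \mu_{f|_D}$, where the isomorphism $g$ is now the identity. Write $r: B \to D$ for the retraction.

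The key step is to build a continuous section of $\psi$. For $u \in E_k(D)$ put $s(u) := u \circ r$, regarded as a map $B \to D \subseteq B$. This is a unital $k$-algebra endomorphism of $B$, and since $r|_D = id_D$ we have $s(u)|_D = u$. Moreover $s(id_D) = r$. This is not $id_B$, so the basepoint is not preserved. To fix this, I will instead use the map
$$\Phi: X_k^D \to X_k^B, \qquad \mu_u \mapsto \mu_{id_B}\cdot(\mu_{r})^{-1}\cdots$$
Since inverses are not available in a monoid, I will proceed differently: I will work with loops $\mu_{u_t}$ in $X_k^D$ based at $\mu_{id_D}$ and lift them to $t\mapsto \mu_{u_t\circ r}$, a loop based at $\mu_r$. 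I then need a path in $X_k^B$ from $\mu_{id_B}$ to $\mu_r$ in order to change basepoint.

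Because this path is not obviously available, I expect the main obstacle to be showing that $\mu \mapsto \mu_{u\circ r}$ is continuous and well defined on $X_k^D$, i.e.\ that it depends only on $\mu_u$. I will check that $\mu_{u\circ r} = \rho\circ\mu_u\circ\delta$, where $\delta:\mathrm{Max}(B)\to\mathrm{Max}(D)$ is the continuous surjection from the proof of Theorem \ref{m4}, and $\rho:\mathrm{Max}(D)\to\mathrm{Max}(B)$ is induced by $r$, namely $N \mapsto \mu(r^{-1}(N))$. This description gives well-definedness at once. Continuity in the compact-open topology follows because pre- and post-composition with fixed continuous maps is continuous. Handling the basepoint is where I am least certain. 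I would first try to show $\mu_r \simeq \mu_{id_B}$ within $X_k^B$. If that fails, I would use the H-space structure from Theorem \ref{m3}: multiplying the lifted loop pointwise by a suitable path reduces $\pi_1$ at $\mu_r$ to $\pi_1$ at $\mu_{id_B}$ on the relevant path component.

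Given a basepoint-preserving continuous $\Phi$ with $\psi\circ\Phi = id$, which holds because $(u\circ r)|_D = u$, functoriality of $\pi_1$ gives $i^{\ast}\circ\Phi_{\ast} = id$. Hence $i^{\ast}$ is a split surjection. Finally, since $\pi_{k}^{alg}(B)$ is abelian by Theorem \ref{m3}, the splitting lemma for abelian groups yields
$$\pi_{k}^{alg}(B) \cong \pi_{k}^{alg}(D)\times \ker(i^{\ast}).$$
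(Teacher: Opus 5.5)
The gap is the basepoint step you flag, and neither of your proposed repairs closes it. Your section $\Phi(\mu_u)=\mu_{i\circ u\circ r}=\rho\circ\mu_u\circ\delta$ is fine. The factorization holds because $B$ is a pm-ring, and it gives a sounder continuity argument than the paper's square, which only controls $\mu_{i\circ f\circ r}$ on $\theta_r(\mathrm{Max}(D))$. However, $\Phi(\mu_{id_D})=\mu_{i\circ r}=\rho\circ\delta$, so $\Phi_\ast$ lands in $\pi_1(X_k^B,\mu_{i\circ r})$, not in $\pi_k^{alg}(B)=\pi_1(X_k^B,\mu_{id_B})$.

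Your first repair, showing $\mu_{i\circ r}\simeq\mu_{id_B}$ in $X_k^B$, is false in general. Take $B=C(S^1)$, $D=\mathbb{R}$ (fully invariant, since unital endomorphisms fix constants) and $r$ evaluation at $p$. Then $\mu_{i\circ r}$ is the constant map at $M_p$, which is not even homotopic to the identity of $S^1$. Here $\pi_k^{alg}(D)=0$, so the theorem survives, but the lemma you want is not available in general.

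Your second repair does not avoid this. $\mu_{i\circ r}$ is an idempotent of the monoid $X_k^B$, and an idempotent $x\neq 1$ has no one-sided inverse: if $xy=1$ then $x=x^2y=xy=1$. So one-sided multiplication by elements of $X_k^B$, which is what the H-space structure provides, cannot carry the basepoint $\mu_{i\circ r}$ to $\mu_{id_B}$. Moving it along a path presupposes exactly the path of the first repair, as your phrase ``on the relevant path component'' concedes. So the basepoint-preserving $\Phi$ your last paragraph starts from has not been constructed.

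Your argument does prove the statement under the extra hypothesis that some path $\alpha$ in $X_k^B$ runs from $\mu_{id_B}$ to $\mu_{i\circ r}$. Then $[\gamma]\mapsto[\alpha\cdot(\Phi\circ\gamma)\cdot\overline{\alpha}]$ is a homomorphism $\pi_k^{alg}(D)\to\pi_k^{alg}(B)$. Following it by $i^\ast$ gives conjugation by $\psi\circ\alpha$, which is a loop since $\psi(\mu_{i\circ r})=\mu_{id_D}$. This conjugation is the identity because $\pi_k^{alg}(D)$ is abelian, so $i^\ast$ splits and your last step applies.

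This hypothesis holds when $\ker r\subseteq\mathrm{J}(B)$. Then every maximal ideal satisfies $M=r^{-1}(M\cap D)$, so $\mu_{i\circ r}=\mu_{id_B}$ even though $i\circ r\neq id_B$. In particular it covers the use in Section 5, where $\ker r$ is the nilradical. In general you would need a different way to lift loops of endomorphisms of $D$ to loops in $X_k^B$ based at $\mu_{id_B}$, which $u\mapsto i\circ u\circ r$ cannot provide.

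The paper's proof uses the same section and writes $t([\mu_f])=[\mu_{i\circ f\circ r}]$ without addressing basepoints, so the point is not settled there either. You were right to worry about it, but the proposal leaves it open.
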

\begin{proof}
By Theorem \eqref{m4}, there is a natural morphism $i^{\ast}: \pi_{k}^{alg}(B) \rightarrow \pi_{k}^{alg}(D)$ give by $[\mu_{f}] \mapsto [\mu_{f|_{D}}]$. Suppose now that $[\mu_{l}] \in \pi_{k}^{alg}(D)$. Let $r: B \rightarrow D$ be a retraction. Since $[\mu_{l \circ r}] \in \pi_{k}^{alg}(B)$ and since $(l \circ r)|_{D} = l$, we conclude that $i^{\ast}$ is a surjective morphism. 

Suppose that $f \in E_{k}(D)$. Let $r_{\ast}: \mathrm{Spec}(D) \rightarrow \mathrm{Spec}(B)$ be the continuous map induced by the retraction $r$ and let $\theta_{r}: \mathrm{Max}(D) \rightarrow \mathrm{Max}(B)$ be the map given by $\theta_{r} = \mu \circ r_{\ast}|_{\mathrm{Max}(D)}$, where $\mu: \mathrm{Spec}(B) \rightarrow \mathrm{Max}(B)$ is the unique retraction \cite{MO71}. Also, let $i: D \hookrightarrow B$ be the inclusion. Suppose $M_{1} \in \mathrm{Max}(D)$ be any. Let $\mu_{f}(M_{1}) = M_{2}$, $\theta_{r}(M_{2}) = M_{3}$, $\theta_{r}(M_{1}) = M_{4}$, and $\mu_{(i \circ f \circ r)}(M_{4}) = M_{5}$. This implies the following:
\begin{enumerate}
    \item $f^{-1}(M_{1}) \subseteq M_{2}$ and $r^{-1}(M_{2}) \subseteq M_{3}$;
    \item $r^{-1}(M_{1}) \subseteq M_{4}$ and $r^{-1}(f^{-1}(i^{-1}(M_{4}))) \subseteq M_{5}$.
\end{enumerate}
From $(1)$, we conclude that 
$$
r^{-1}(f^{-1}(M_{1})) \subseteq r^{-1}(M_{2}) \subseteq M_{3}.
$$
From $(2)$, we conclude that 
$$
r^{-1}(f^{-1}(i^{-1}(r^{-1}(M_{1})))) \subseteq r^{-1}(f^{-1}(i^{-1}(M_{4}))) \subseteq M_{5}.
$$
Notice now that $f \circ r = r \circ i \circ f \circ r$. Thus, we get that $M_{3} = M_{5}$, that is, for each $f \in E_{k}(D)$, the rectangular diagram in \eqref{d7} commutes. 
\begin{equation}
\begin{tikzcd}\label{d7}
	{\mathrm{Max}(D)} && {\mathrm{Max}(D)} \\
	\\
	{\mathrm{Max}(B)} && {\mathrm{Max}(B)}
	\arrow["{\mu_{f}}", from=1-1, to=1-3]
	\arrow["\theta_{r}"', from=1-1, to=3-1]
	\arrow["\theta_{r}", from=1-3, to=3-3]
	\arrow["{\mu_{(i \circ f \circ r)}}"', from=3-1, to=3-3]
\end{tikzcd}
\end{equation}
Define a map $\psi: X_{k}^{D} \rightarrow X_{k}^{B}$ by $\mu_{f} \mapsto \mu_{(i \circ f \circ r)}$. If $[K, U]$ be a subbasic open set containing $\mu_{(i \circ f \circ r)}$, then it is easy to see that $[\theta_{r}^{-1}(K), \theta_{r}^{-1}(U)]$ is a subbasic open set containing $\mu_{f}$. Moreover, for each $\mu_{g} \in [\theta_{r}^{-1}(K), \theta_{r}^{-1}(U)]$, we have that $\mu_{(i \circ g \circ r)} \in [K, U]$. This shows that the map $\psi$ is continuous. Hence, the map $t: \pi_{k}^{alg}(D) \rightarrow \pi_{k}^{alg}(B)$ defined by $[\mu_{f}] \mapsto [\mu_{(i \circ f \circ r)}]$ is a well-defined group homomorphism. If $\mathbf{1}$ is the identity map on $\pi_{k}^{alg}(D)$, then observe that $i^{\ast} \circ t = \mathbf{1}$, that is, $t$ is a lift to the identity map $\mathbf{1}$. Thus, $\pi_{k}^{alg}(B) \cong \pi_{k}^{alg}(D) \times \mathrm{ker}(i^{\ast})$.
\end{proof}

The following result follows trivially from Theorem \eqref{m4} and Theorem \eqref{maharajji}, so the proof is omitted.

\begin{theorem}
Let $h: A \rightarrow B$ be a morphism with the kernel $I$, and suppose that $h(A)$ is a fully invariant retract of $B$. Then the natural morphism $h^{\ast}$ from $\pi_{k}^{alg}(B)$ to $\pi_{k}^{alg}(A/I)$ is surjective. Moreover, $\pi_{k}^{alg}(B) \cong \pi_{k}^{alg}(A/I) \times \mathrm{ker}(h^{\ast})$.
\end{theorem}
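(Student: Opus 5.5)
The plan is to combine Theorem \eqref{m4} with Theorem \eqref{maharajji}, using the isomorphism $g: A/I \to D$, where $D = h(A)$, to pass between $\pi_{k}^{alg}(D)$ and $\pi_{k}^{alg}(A/I)$.

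\emph{Applying Theorem \eqref{maharajji}.} Set $D = h(A)$. By hypothesis $D$ is a fully invariant retract of $B$, so Theorem \eqref{maharajji} applies. Write $i: D \hookrightarrow B$ for the inclusion. We obtain a surjective homomorphism $i^{\ast}: \pi_{k}^{alg}(B) \to \pi_{k}^{alg}(D)$, $[\mu_f] \mapsto [\mu_{f|_D}]$, together with a splitting $t$ satisfying $i^{\ast} \circ t = \mathbf{1}$. This gives
$$\pi_{k}^{alg}(B) \cong \pi_{k}^{alg}(D) \times \ker(i^{\ast}).$$

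\emph{Transporting along $g$.} The map $g$ is an isomorphism in $\lPM{k}$; note that $D$ is a pm-ring, being a homomorphic image of the pm-ring $A$. Applying Theorem \eqref{m4} to the isomorphism $g^{-1}: D \to A/I$ (or its $B=$target version with trivial kernel, where full invariance of the image is automatic since $g^{-1}$ is onto), we get an isomorphism
$$g^{\ast}: \pi_{k}^{alg}(D) \to \pi_{k}^{alg}(A/I), \qquad [\mu_{l}] \mapsto [\mu_{g^{-1}\circ l \circ g}].$$

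\emph{Identifying $h^{\ast}$ as a composite.} Next I check that $h^{\ast} = g^{\ast} \circ i^{\ast}$. By the formula in the proof of Theorem \eqref{m4}, $h^{\ast}[\mu_f] = [\mu_{g^{-1}\circ f|_D \circ g}]$, and this is exactly $g^{\ast}(i^{\ast}[\mu_f])$. It then follows that:
\begin{enumerate}
\item $h^{\ast}$ is surjective, as the composite of a surjection and an isomorphism;
\item $\ker(h^{\ast}) = \ker(i^{\ast})$, since $g^{\ast}$ is injective.
\end{enumerate}
Finally, $t \circ (g^{\ast})^{-1}$ is a section of $h^{\ast}$. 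Because all the groups are abelian by Theorem \eqref{m3}, a split surjection yields
$$\pi_{k}^{alg}(B) \cong \pi_{k}^{alg}(A/I) \times \ker(h^{\ast}).$$

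\emph{Main obstacle.} The only step needing care is confirming that the natural map $h^{\ast}$ constructed in Theorem \eqref{m4} literally factors as $g^{\ast}\circ i^{\ast}$. This means checking that the maps on the spaces $X_k$ compose correctly: the map $\sigma = \theta\circ\delta$ used for $h$ should be the composite of the corresponding maps for $i$ and for $g$. That holds directly from their definitions, since $\theta$ is induced by $g$ and $\delta$ is the contraction map for the inclusion $D\subseteq B$.
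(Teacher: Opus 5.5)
Your proposal is correct and is the argument the paper intends when it omits the proof as following trivially from Theorems~\ref{m4} and~\ref{maharajji}: with $D=h(A)$ you get $h^{\ast}=g^{\ast}\circ i^{\ast}$ with $g^{\ast}$ an isomorphism, so surjectivity and the product decomposition pass from $i^{\ast}$ to $h^{\ast}$. One small slip: the formula $[\mu_l]\mapsto[\mu_{g^{-1}\circ l\circ g}]$ comes from applying Theorem~\ref{m4} to $g\colon A/I\to D$ itself, whereas applying it to $g^{-1}$ gives the inverse map $\pi_{k}^{alg}(A/I)\to\pi_{k}^{alg}(D)$; this does not affect the argument.
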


A central goal in commutative ring theory is understanding how invariants behave under standard ring constructions. The group $\pi_{k}^{alg}$ naturally respects the direct product. This structural correspondence is formalized in the following.

\begin{theorem}\label{m5}
$\pi_{k}^{alg}(A \times B) \cong \pi_{k}^{alg}(A) \times \pi_{k}^{alg}(B)$.
\end{theorem}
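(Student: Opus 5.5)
The plan is to identify the path component of $\mu_{id_{A\times B}}$ in $X_{k}^{A\times B}$ with a product of subspaces of $X_{k}^{A}$ and $X_{k}^{B}$, and then use $\pi_1(P\times Q)\cong\pi_1(P)\times\pi_1(Q)$.

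\textbf{Step 1: splitting the spectrum.} Write $e=(1,0)$. Every prime ideal of $A\times B$ is of the form $P\times B$ or $A\times Q$. Hence
$$
\mathrm{Max}(A\times B)=\mathrm{Max}(A)\sqcup\mathrm{Max}(B),
$$
with the two pieces clopen, namely $V(1-e)$ and $V(e)$. The retraction $\mu$ of \eqref{e1} preserves this splitting.

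\textbf{Step 2: loops stay in piece-preserving maps.} Let $\gamma$ be a spectral loop and $\gamma(t)=\mu_{h_t}$. For a fixed $M$, the path $t\mapsto\mu_{h_t}(M)$ is continuous, since evaluation is continuous on the compact-open topology of maps out of a compact Hausdorff space. It starts at $M$, so by clopenness it stays in the piece containing $M$. Thus every $\mu_{h_t}$ maps each piece into itself. The same argument applies to each stage of a homotopy of loops.

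\textbf{Step 3: piece-preserving maps come from product endomorphisms.} This is the main algebraic step, and the one I expect to be the obstacle. Let $h\in E_k(A\times B)$ with $\mu_h$ piece-preserving. For an idempotent $u$, a prime $P$ contains $u$ if and only if $\mu(P)$ does, because $P$ contains exactly one of $u,1-u$. Applying this to $P=h^{-1}(M)$ and $u=e$ gives
$$
h(e)\in M \iff e\in h^{-1}(M) \iff e\in\mu_h(M)\iff e\in M,
$$
the last equivalence by piece-preservation. So $e(1-h(e))$ and $h(e)(1-e)$ are idempotents lying in every maximal ideal, hence in the Jacobson radical, hence zero. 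Therefore $h(e)=e$. Consequently $h(a,0)=h(a,0)e\in A\times 0$ and $h(0,b)\in 0\times B$, so $h=f\times g$ with $f\in E_k(A)$ and $g\in E_k(B)$. One checks directly that $\mu_{f\times g}=\mu_f\sqcup\mu_g$. Conversely, every $f\times g$ gives a piece-preserving map.

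\textbf{Step 4: topology.} Consider $\Phi:X_k^A\times X_k^B\to X_k^{A\times B}$ given by $(\mu_f,\mu_g)\mapsto\mu_f\sqcup\mu_g$. It is well defined because $\mu_f$ depends only on the induced map. I will check that $\Phi$ is a homeomorphism onto the subspace $Z$ of piece-preserving induced maps. Since both pieces are compact and clopen, a subbasic set $[K,U]$ for maps on the disjoint union restricts, on $Z$, to a finite intersection of subbasic sets on each factor (split $K=K_A\sqcup K_B$ and $U$ likewise). Conversely, subbasic sets on each factor are traces of subbasic sets on the union. By Step 2, every loop at $\mu_{id}$ and every homotopy of such loops lies in $Z$. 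Hence
$$
\pi_k^{alg}(A\times B)=\pi_1(Z,\mu_{id})\cong\pi_1(X_k^A\times X_k^B,(\mu_{id_A},\mu_{id_B}))\cong\pi_k^{alg}(A)\times\pi_k^{alg}(B).
$$
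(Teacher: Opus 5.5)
Your proof is correct, and its route differs from the paper's in a way that matters.

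The paper asserts a global bijection $E_{k}(A\times B)\leftrightarrow E_{k}(A)\times E_{k}(B)$, via $f\mapsto(\Pi_{1}\circ f,\Pi_{2}\circ f)$. It then concludes that $X_{k}^{A\times B}\cong X_{k}^{A}\times X_{k}^{B}$ as spaces, writing $\mu_{f}$ as $M\times B\mapsto\mu_{f_{1}}(M)\times B$ for every $f$. That global claim is false in general. On $A\times A$, the swap $(a,b)\mapsto(b,a)$ and the maps $(a,b)\mapsto(a,a)$, $(a,b)\mapsto(b,b)$ are unital $k$-algebra endomorphisms not of product form. For example, $\mathbb{R}\times\mathbb{R}$ has four $\mathbb{R}$-algebra endomorphisms, one for each idempotent value of $h(e)$. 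By contrast, $E_{\mathbb{R}}(\mathbb{R})\times E_{\mathbb{R}}(\mathbb{R})$ is a single point. Correspondingly, $X_{k}^{A\times B}$ can contain maps that swap or collapse the two clopen pieces of $\mathrm{Max}(A\times B)$.

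Your Steps 2 and 3 supply exactly what the paper's argument lacks.
\begin{itemize}
\item Step 2: connectedness of $[0,1]$ and $[0,1]^{2}$, against the clopen decomposition $V(e)\sqcup V(1-e)$, shows that every loop at $\mu_{id}$ and every homotopy of such loops stays in the subspace $Z$ of piece-preserving induced maps.
\item Step 3: the idempotent argument forces $h(e)=e$, so $Z$ consists precisely of the maps $\mu_{f\times g}$.
\end{itemize}

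Your Step 4 is essentially the paper's subbasic-open-set computation with $K=K_{A}\sqcup K_{B}$, $U=U_{A}\sqcup U_{B}$. The difference is that you carry it out on $Z$, where it is valid, rather than on all of $X_{k}^{A\times B}$.

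If the paper's route worked, it would give the stronger statement that the entire spaces $X_{k}^{A\times B}$ and $X_{k}^{A}\times X_{k}^{B}$ are homeomorphic, but that statement does not hold. Your route proves only $Z\cong X_{k}^{A}\times X_{k}^{B}$, which is all that $\pi_{1}$ based at $\mu_{id}$ can see. So your argument is a valid proof of the theorem, and it repairs the gap in the paper's.
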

\begin{proof}
Let $\Pi_{1}: A \times B \rightarrow A$ and $\Pi_{2}: A \times B \rightarrow B$ be the projections. Observe first that for each $f \in E_{k}(A \times B)$, $\Pi_{1} \circ f = f_{1} \in E_{k}(A)$ and $\Pi_{2} \circ f = f_{2} \in E_{k}(B)$. Conversely, if $h_{1} \in E_{k}(A)$ and $h_{2} \in E_{k}(B)$, then $h = (h_{1}, h_{2}): A \times B \rightarrow A \times B$ defined by $(a, b) \mapsto (h_{1}(a), h_{2}(b))$ is a morphism. Thus, there exists a one-to-one correspondence between the sets $E_{k}(A \times B)$ and $E_{k}(A) \times E_{k}(B)$.

Let $f \in E_{k}(A \times B)$, $\Pi_{1}(f) = f_{1}$, and $\Pi_{2}(f) = f_{2}$. As mentioned earlier, the map $\mu_{f}: \mathrm{Max}(A \times B) \rightarrow \mathrm{Max}(A \times B)$, given by $M \times B \mapsto \mu_{f_{1}}(M) \times B$ and $A \times N \mapsto A \times \mu_{f_{2}}(N)$, is continuous. Let $\sigma: \mathrm{Max}(A \times B) \rightarrow \mathrm{Max}(A) \sqcup \mathrm{Max}(B)$ be the homeomorphism given by $M \times B \mapsto M$ and $A \times N \mapsto N$. It is straightforward to see that the map $\mu_{f_{1}} \bigsqcup \mu_{f_{2}}: \mathrm{Max}(A) \sqcup \mathrm{Max}(B) \rightarrow \mathrm{Max}(A) \sqcup \mathrm{Max}(B)$, given by $M \mapsto \mu_{f_{1}}(M)$ and $N \mapsto \mu_{f_{2}}(N)$, is continuous. Since $\sigma \circ \mu_{f} = (\mu_{f_{1}} \bigsqcup \mu_{f_{2}}) \circ \sigma$, therefore, we conclude that for each $f \in E_{k}(A \times B)$, the rectangular diagram in \eqref{d2} commutes.
\begin{equation}\label{d2}
\begin{tikzcd}
{\mathrm{Max}(A \times B)} && {\mathrm{Max}(A \times B)} \\
\\
{\mathrm{Max}(A) \sqcup \mathrm{Max}(B)} && {\mathrm{Max}(A) \sqcup \mathrm{Max}(B)}
\arrow["{\mu_{f}}", from=1-1, to=1-3]
\arrow["\sigma"', from=1-1, to=3-1]
\arrow["\sigma", from=1-3, to=3-3]
\arrow["{\mu_{f_{1}} \bigsqcup \mu_{f_{2}}}"', from=3-1, to=3-3]
\end{tikzcd}
\end{equation}
 Define a map $\psi: X_{k}^{A \times B} \rightarrow X_{k}^{A} \times X_{k}^{B}$ by $\psi(\mu_{f}) = (\mu_{f_{1}}, \mu_{f_{2}})$. Since the sets $E_{k}(A \times B)$ and $E_{k}(A) \times E_{k}(B)$ are in a one-to-one correspondence, we conclude that $\psi$ is bijective. Finally, if we consider $[K_{A}, U_{A}] \times [K_{B}, U_{B}]$, the subbasic open set containing $(\mu_{f_{1}}, \mu_{f_{2}})$, then by the diagram in \eqref{d2}, we conclude that $\mu_{f} \in [\sigma^{-1}(K_{A} \cup K_{B}), \sigma^{-1}(U_{A} \cup U_{B})]$. This simultaneously shows that $\psi$ is both a continuous and an open map with $\psi(\mu_{id_{A \times B}}) = (\mu_{id_{A}}, \mu_{id_{B}})$.
\end{proof}

\subsection{Triviality of \texorpdfstring{$\pi_{k}^{alg}(A)$}{pi}}

If $A$ is a clean ring and if $\gamma:[0,1] \to X_{k}^{A}$ is a spectral loop, then $\gamma$ is constant, since $\mathrm{Max}(A)$ is totally disconnected, and so $\pi_{k}^{alg}(A) = 0$. Thus, for a topological space $X$, if $A$ is either of the rings $C_c(X)$, $C_{c}^{\ast}(X)$ or $C^{F}(X)$, then we can conclude that $\pi_{k}^{alg}(A) = 0$. 

For a given ring $A$, the group $\pi_{k}^{alg}(A)$ consists of homotopy classes of loops in the space of self-maps of $\mathrm{Max}(A)$ that arise from morphisms of $A$. Since every intermediate stage of a spectral homotopy must remain in $X_{k}^{A}$, such loops are constrained by the rigidity of morphisms. So, $\pi_{k}^{alg}(A)$ measures the extent to which $A$ admits continuous one-parameter families of morphisms. For spectrally rigid rings, the spectral fundamental group is trivial. It becomes nontrivial precisely when morphisms produce non-null-homotopic loops in the space of induced spectral maps. In this way, $\pi_{k}^{alg}(A)$ records homotopical dynamics intrinsic to the algebraic structure of $A$. 

\subsection{Nontriviality of \texorpdfstring{$\pi_{k}^{alg}(A)$}{pi}}

\begin{theorem}\label{m6}
Let $X$ be a topological space. Then for some Tychonoff space $Y,$ 
$$
\pi_{\mathbb{R}}^{alg}\!\big(C^{\ast}(X)\big)
\;\cong\;
\pi_{\mathbb{Z}}^{alg}\!\big(C^{\ast}(X)\big)
\;\cong\;
\pi_{1}\!\big(C(\beta Y,\beta Y), id_{\beta Y}\big).
$$
\end{theorem}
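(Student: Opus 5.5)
Take $Y$ to be the Tychonoff reflection $\tau X$ of $X$ (the quotient of $X$ identifying points that no bounded real-valued continuous function separates). It is standard that $C^{\ast}(X)\cong C^{\ast}(Y)\cong C(\beta Y)$ as rings. By Theorem \eqref{m4}, applied to these isomorphisms (an isomorphism has full image, which is trivially fully invariant), the problem reduces to proving $\pi_{k}^{alg}(C(K))\cong \pi_1(C(K,K),id_K)$ for the compact Hausdorff space $K=\beta Y$ and $k\in\{\mathbb Z,\mathbb R\}$. $C(K)$ is a pm-ring, and by Gelfand--Kolmogorov $\mathrm{Max}(C(K))$ is homeomorphic to $K$ via $x\mapsto M_x=\{f: f(x)=0\}$.

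\textbf{Step 1: the endomorphisms.} First show that $E_{\mathbb Z}(C(K))=E_{\mathbb R}(C(K))$. Every unital ring endomorphism $\phi$ of $C(K)$ preserves squares, hence nonnegative functions, hence order. It fixes $\mathbb Q$, so by order-preservation it fixes the constants $\mathbb R$ and is $\mathbb R$-linear. This gives $X_{\mathbb Z}^{C(K)}=X_{\mathbb R}^{C(K)}$ and explains the first isomorphism.

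\textbf{Step 2: identify the induced maps.} For each continuous $s:K\to K$, composition $\phi_s(f)=f\circ s$ is an $\mathbb R$-algebra endomorphism. Conversely, every unital $\mathbb R$-algebra endomorphism $\phi$ is of this form. For each $x$, $f\mapsto \phi(f)(x)$ is a unital homomorphism $C(K)\to\mathbb R$, hence evaluation at a unique point $s(x)$. Continuity of $s$ follows because $f\circ s=\phi(f)$ is continuous for every $f$ and $K$ carries the weak topology induced by $C(K)$.

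Next compute $\mu_{\phi_s}$. We have $\phi_s^{-1}(M_x)=\{f: f(s(x))=0\}=M_{s(x)}$, which is already maximal. So under $x\leftrightarrow M_x$, $\mu_{\phi_s}=s$. Hence $X_k^{C(K)}$ is exactly the whole of $C(\mathrm{Max}(C(K)),\mathrm{Max}(C(K)))$. Transporting along the Gelfand--Kolmogorov homeomorphism, which induces a homeomorphism of mapping spaces in the compact-open topology, $X_k^{C(K)}\cong C(K,K)$ with $\mu_{id}\mapsto id_K$. Taking $\pi_1$ at the base points gives the second isomorphism.

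\textbf{Main obstacle.} The delicate point is the reduction from arbitrary $X$ to $C(\beta Y)$. One must check that the isomorphism $C^{\ast}(X)\cong C(\beta\tau X)$ is really a ring isomorphism and that Theorem \eqref{m4} transfers $\pi_k^{alg}$ along it. A cleaner route avoids Theorem \eqref{m4}: an isomorphism $h:A\to B$ conjugates $E_k(A)$ onto $E_k(B)$, and the homeomorphism $\mathrm{Max}(B)\to\mathrm{Max}(A)$ conjugates the induced maps. Conjugation by a homeomorphism is a homeomorphism of mapping spaces, so it gives $X_k^A\cong X_k^B$ directly, preserving base points. The remaining surjectivity claim in Step 2 (every endomorphism is a composition operator) rests on the standard fact that real-valued characters of $C(K)$ are point evaluations. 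I would cite this rather than reprove it.
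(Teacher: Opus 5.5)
Your proposal is correct and follows essentially the same route as the paper. Both arguments reduce to $C(\beta Y)$ via the Gillman--Jerison Tychonoff replacement and Theorem~\ref{m4}, identify every endomorphism of $C(\beta Y)$ with a composition operator $f\mapsto f\circ s$, and transport the compact-open topology along $\mathrm{Max}(C(\beta Y))\cong\beta Y$; where the paper cites Gillman--Jerison 10.6 and merely asserts automatic $\mathbb{R}$-linearity, you prove the first via characters and the second via order preservation. One small imprecision: $Y$ must carry the weak topology induced by $C(X)$, as in Gillman--Jerison 3.9, not the quotient topology, which need not be Tychonoff.
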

\begin{proof}
By theorem $(3.9)$ in \cite{GJ60}, there exists a Tychonoff space $Y$ such that $C(X) \cong C(Y)$; moreover, this isomorphism maps $C^{\ast}(X)$ onto $C^{\ast}(Y)$. Thus, applying Theorem \eqref{m4}, we get that $\pi_{\mathbb{Z}}^{alg}(C^{\ast}(X)) \cong \pi_{\mathbb{Z}}^{alg}(C^{\ast}(Y))$. Further, since $C^{\ast}(Y) \cong C(\beta Y)$, therefore again applying Theorem \eqref{m4}, we get that $\pi_{\mathbb{Z}}^{alg}(C^{\ast}(Y)) \cong \pi_{\mathbb{Z}}^{alg}(C(\beta Y))$. Let $A = C(\beta Y)$. Since $\mathrm{Max}(A) \cong \beta Y$, let $\theta: \mathrm{Max}(A) \rightarrow \beta Y$ be the homeomorphism given by $M_{p} \mapsto p$.

By theorem $(10.6)$ in \cite{GJ60}, each $f \in E(A)$ is induced by a unique continuous map $\phi_{f}: \beta Y \rightarrow \beta Y$. If $\mu_{f}: \mathrm{Max}(A) \rightarrow \mathrm{Max}(A)$ is the continuous map induced by $f \in E_{\mathbb{Z}}(A)$, then $\mu_{f}(M_{p}) = M_{\phi_{f}(p)}$. This amounts to stating that the rectangular diagram in \eqref{d3} commutes.
\begin{equation}
\begin{tikzcd}\label{d3}
	{\mathrm{Max}(A)} && {\mathrm{Max}(A)} \\
	\\
	{\beta Y} && {\beta Y}
	\arrow["{\mu_{f}}", from=1-1, to=1-3]
	\arrow["\theta"', from=1-1, to=3-1]
	\arrow["\theta", from=1-3, to=3-3]
	\arrow["{\phi_{f}}"', from=3-1, to=3-3]
\end{tikzcd}
\end{equation}
We now show that $X_{\mathbb{Z}}^{A} \cong C(\beta Y, \beta Y)$. Consider the map $\chi: X_{\mathbb{Z}}^{A} \rightarrow C(\beta Y, \beta Y)$ given by $\chi(\mu_{f}) = \phi_{f}$ for all $f \in E_{\mathbb{Z}}(A)$. It is evident that $\chi$ is bijective. Now, for $f \in E_{\mathbb{Z}}(A)$, suppose that $\phi_{f} \in [K, U]$ where $K$ is compact and $U$ is open in $\beta Y$. If we consider the subbasic open set $[\theta^{-1}(K), \theta^{-1}(U)]$, then by the diagram in \eqref{d3}, $\mu_{f} \in [\theta^{-1}(K), \theta^{-1}(U)]$ and $\chi([\theta^{-1}(K), \theta^{-1}(U)]) = [K, U]$. This simultaneously shows that $\chi$ is both a continuous and an open map with $\chi(\mu_{id_{A}}) = id_{\beta Y}$. 

Finally, since every unital ring homomorphism from $C(X)$ to itself is an $\mathbb{R}$-algebra homomorphism, we conclude that $\pi_{\mathbb{R}}^{alg}\!\big(C^{\ast}(X)\big) \cong \pi_{\mathbb{Z}}^{alg}\!\big(C^{\ast}(X)\big)$.
\end{proof}

\begin{corollary}\label{d0}
If $X$ is a compact Hausdorff space, then 
$$
\pi_{\mathbb{R}}^{alg}\!\big(C(X)\big)
\;\cong\;
\pi_{\mathbb{Z}}^{alg}\!\big(C(X)\big)
\;\cong\;
\pi_{1}\!\big(C(X, X), id_{X}\big).
$$
\end{corollary}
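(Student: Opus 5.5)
The plan is to derive Corollary \eqref{d0} from the proof of Theorem \eqref{m6}, specialized to the case where $X$ is already compact Hausdorff. There is then no need to pass to an auxiliary Tychonoff space $Y$.

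First, note that a compact Hausdorff space $X$ is Tychonoff and satisfies $\beta X = X$. Moreover every continuous real-valued function on $X$ is bounded, so $C^{\ast}(X) = C(X)$. Applying Theorem \eqref{m6} directly gives
$$\pi_{\mathbb{R}}^{alg}(C(X)) \cong \pi_{\mathbb{Z}}^{alg}(C(X)) \cong \pi_{1}(C(\beta Y,\beta Y), id_{\beta Y})$$
for some Tychonoff $Y$. To land on $C(X,X)$ itself, we either identify $\beta Y$ with $X$, or rerun the second half of that proof with $Y = X$. I prefer the latter because it avoids the question of how $Y$ was chosen.

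Concretely, set $A = C(X)$. Let $\theta\colon \mathrm{Max}(A)\to X$ be the homeomorphism $M_p \mapsto p$, which is Gelfand--Kolmogorov duality for compact $X$. By theorem $(10.6)$ in \cite{GJ60}, every unital ring endomorphism $f$ of $C(X)$ has the form $u\mapsto u\circ\phi_f$ for a unique continuous $\phi_f\colon X\to X$. Conversely, every continuous $\phi$ induces such an endomorphism, namely composition with $\phi$. Hence $\chi\colon \mu_f\mapsto \phi_f$ is a bijection $X_{\mathbb{Z}}^{A}\to C(X,X)$. Since $\mu_f(M_p)=M_{\phi_f(p)}$, we have $\chi(\mu) = \theta\circ\mu\circ\theta^{-1}$. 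Conjugation by a homeomorphism carries subbasic sets $[\theta^{-1}(K),\theta^{-1}(U)]$ onto $[K,U]$ bijectively, so $\chi$ is a homeomorphism for the compact-open topologies. It also sends $\mu_{id_A}$ to $id_X$, so it induces an isomorphism on $\pi_1$ at these basepoints.

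Finally, every unital ring endomorphism of $C(X)$ fixes the rationals and preserves positivity, since nonnegative functions are squares. It is therefore monotone and hence $\mathbb{R}$-linear, which gives $E_{\mathbb{R}}(A)=E_{\mathbb{Z}}(A)$ and $X_{\mathbb{R}}^{A}=X_{\mathbb{Z}}^{A}$. This yields the first isomorphism. The only step that takes care is surjectivity of $\chi$ together with the homeomorphism claim. Both are settled by observing that $\chi$ is literally conjugation by $\theta$; everything else is bookkeeping already done in Theorem \eqref{m6}.
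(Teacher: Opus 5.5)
Your proposal is correct and is essentially the paper's argument. The paper simply invokes Theorem~\ref{m6} with $C^{\ast}(X)=C(X)$ and $\beta X\cong X$, tacitly taking $Y=X$, while you make the choice $Y=X$ explicit and unpack the relevant half of that proof, where $\chi$ is conjugation by $\theta$. As a minor simplification, every $\phi\in C(X,X)$ already gives the $\mathbb{R}$-algebra endomorphism $u\mapsto u\circ\phi$, so $X_{\mathbb{R}}^{A}=X_{\mathbb{Z}}^{A}=C(\mathrm{Max}(A),\mathrm{Max}(A))$, and neither the appeal to Gillman--Jerison (10.6) nor your positivity argument is strictly needed.
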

\begin{proof}
In this case, $C(X) \cong C^{\ast}(X)$ and $\beta X \cong X$. Thus, applying Theorem \eqref{m6}, the result follows.
\end{proof}

\begin{example}
Let $A = C(S^1)$, where $S^1$ denotes the unit circle. By Corollary \eqref{d0}, we have
$$
\pi_{\mathbb{Z}}^{alg}(A) \cong \pi_{1}(C(S^1, S^1), id_{S^1}).
$$
It is well-known that $\pi_{1}(C(S^1, S^1), id_{S^1}) \cong \mathbb{Z}$.
Hence, $\pi_{k}^{alg}(C(S^1)) \cong \mathbb{Z}$.  
\end{example}

\section{Spectral fundamental groups beyond function spaces}

Throughout this section, we work with the category $\lPM{\mathbb{R}}$. Let $T$ be a path-connected compact Hausdorff space. Motivated by Corollary \eqref{d0}, it is natural to ask the following questions: First, does there exist a pm-ring $A$ that cannot be realized as a subring of a function ring over a field of characteristic zero and such that $\mathrm{Max}(A) \cong T$? Second, does there exist a pm-ring $A$ that cannot be realized as a subring of a function ring over a field of characteristic zero, yet possesses a nontrivial spectral fundamental group? 

One might initially expect that such rings can only be realized as subrings of a function ring over a field of characteristic zero. However, our first theorem utilizes a generalized dual number construction to provide an explicit example of a pm-ring whose maximal spectrum is homeomorphic to a given topological space, but which fails to embed algebraically into any function ring over a field of characteristic zero. Furthermore, our second theorem provides an explicit example of an algebraic pm-ring with a nontrivial spectral fundamental group.

Let $C(T)$ be the ring of all real-valued continuous functions on $T$ and let $B = C(T)$. Let $\Delta$ be any set. For each $\delta \in \Delta$ let $n_{\delta} \in \mathbb{N}$ be any such that $n_{\delta} > 1$. Throughout this section, we work with the ring $A = B[x_{\delta} : \delta \in \Delta]/(x_{\delta}^{n_{\delta}}: \delta \in \Delta)$.

\begin{theorem}\label{m8}
$A$ cannot be embedded into any function ring over a field of characteristic zero, yet $\mathrm{Max}(A)$ is a path-connected compact Hausdorff space with $|\mathrm{Max}(A)| > 1$.
\end{theorem}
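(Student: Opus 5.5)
Both halves come from the nilpotent structure of $A = B[x_{\delta} : \delta \in \Delta]/(x_{\delta}^{n_{\delta}})$. Let $N$ denote the ideal generated by the images of the $x_{\delta}$. For the non-embeddability part, I will use that $A$ has nonzero nilpotents. Fix some $\delta \in \Delta$; I assume $\Delta \neq \emptyset$, otherwise $A = B = C(T)$ is itself a function ring and the claim fails. The image $\bar{x}_{\delta}$ of $x_{\delta}$ is nonzero in $A$, because $A$ is a free $B$-module with basis the monomials $\prod x_{\delta}^{e_{\delta}}$ with $0 \le e_{\delta} < n_{\delta}$, finitely many nonzero. On the other hand $\bar{x}_{\delta}^{n_{\delta}} = 0$. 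Now take any injective ring homomorphism $\iota : A \to F^{S}$ into a ring of functions on a set $S$ with values in a field $F$, with pointwise operations. Then $\iota(\bar{x}_{\delta})(s)^{n_{\delta}} = 0$ in $F$ for every $s$, so $\iota(\bar{x}_{\delta}) = 0$ because $F$ is reduced. This contradicts injectivity. The characteristic of $F$ plays no role; it only has to be a field.

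For the spectral part, the first step is to show that $N$ is exactly the nilradical of $A$ and that $A/N \cong B$. Every element of $N$ is a finite $B$-combination of monomials involving finitely many nilpotent generators, so it is nilpotent. Moreover, the quotient $A/N$ is $B$. Since $N$ lies in every prime ideal, the primes of $A$ correspond to the primes of $B$ via $P \mapsto P/N$, and the maximal ideals correspond likewise. These correspondences are homeomorphisms for the Zariski topology, so $\mathrm{Spec}(A) \cong \mathrm{Spec}(B)$ and $\mathrm{Max}(A) \cong \mathrm{Max}(B)$. In particular $A$ is a pm-ring, since $B = C(T)$ is one, which is consistent with the paper working inside $\lPM{\mathbb{R}}$. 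Clearly $A$ is also an $\mathbb{R}$-algebra.

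Next I invoke the classical fact, already used in the proof of Theorem \ref{m6}, that $\mathrm{Max}(C(T)) \cong T$ via $M_{p} \leftrightarrow p$ when $T$ is compact Hausdorff. Hence $\mathrm{Max}(A) \cong T$, which is path-connected and compact Hausdorff. For $|\mathrm{Max}(A)| > 1$ I need $|T| > 1$. I will take this as part of the standing choice of $T$, since a one-point $T$ would violate the claim; one could alternatively note that path-connected with more than one point is intended.

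The only genuinely delicate point is checking that the bijection $\mathrm{Spec}(A) \to \mathrm{Spec}(A/N)$ is a homeomorphism. This is the standard fact that $V(I)$ corresponds to $V(I + N/N)$, and it also settles the pm property cleanly. Everything else is routine.
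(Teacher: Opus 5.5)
Your proposal is correct and follows essentially the same route as the paper: the nonzero nilpotent $\bar{x}_{\delta}$ rules out any embedding into a (necessarily reduced) function ring, and the nil ideal $N = (\bar{x}_{\delta} : \delta \in \Delta)$ with $A/N \cong B$ gives $\mathrm{Max}(A) \cong \mathrm{Max}(B) \cong T$. The one small difference is how you get the pm property: you transfer it directly through the homeomorphism $\mathrm{Spec}(A) \cong \mathrm{Spec}(B)$, whereas the paper computes $\mathrm{J}(A) = \mathrm{N}(A) = I$ and invokes a cited criterion (Hausdorff $\mathrm{Max}(A)$ together with $\mathrm{J}(A) = \mathrm{N}(A)$). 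Your caveats $\Delta \neq \emptyset$ and $|T| > 1$ are assumptions the paper also needs, the first only implicitly.
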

\begin{proof}
Since $A$ is not reduced, so it is not isomorphic to any subring of a function ring over a field of characteristic zero.

Let us consider the ideal $I = (x_{\delta}: \delta \in \Delta)/(x_{\delta}^{n_{\delta}}: \delta \in \Delta)$ of $A$. Since $A/I \cong B$, there is a surjective ring homomorphism $\varphi: A \rightarrow B$ with kernel $I$. Observe that $\mathrm{N}(A) = I$. As $I \subseteq \mathrm{J}(A)$, we conclude that $\varphi^{-1}(\mathrm{J}(B)) = \mathrm{J}(A)$. Also, since $\mathrm{J}(B) = 0$, we obtain that $\mathrm{J}(A) = I$. The map $\varphi_{\ast}: \mathrm{Max}(B) \rightarrow \mathrm{Max}(A)$ is a homeomorphism is well-known, since $\varphi$ is surjective and $V(I) = \mathrm{Max}(A)$. Now, if we take $T$ as a path-connected compact Hausdorff space with $|T| > 1$, then $\mathrm{Max}(A) \cong T \cong \mathrm{Max}(B)$. Further, since $\mathrm{Max}(A)$ is Hausdorff and since $\mathrm{J}(A) = \mathrm{N}(A)$, we conclude that $A$ is a pm-ring (see \cite{C82}).
\end{proof}

\begin{theorem}
If $\pi_{\mathbb{R}}^{alg}(B) \neq 0$, then $\pi_{\mathbb{R}}^{alg}(A) \neq 0$.
\end{theorem}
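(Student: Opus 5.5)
The plan is to realize $B$ as a retract of $A$ and compare the spaces of induced spectral maps. The inclusion $\iota: B \hookrightarrow A$ and the quotient $\varphi: A \to B$ of Theorem \eqref{m8} satisfy $\varphi \circ \iota = id_B$. Hence $B$ is a retract of $A$. It is not a fully invariant one, though, since $\mathbb{R}$-endomorphisms of $A$ may send $B$ into elements with nilpotent parts. So I will not invoke Theorem \eqref{maharajji} directly. Instead I will build continuous maps between $X_{\mathbb{R}}^{B}$ and $X_{\mathbb{R}}^{A}$ that compose to the identity on $X_{\mathbb{R}}^{B}$. Then $\pi_{\mathbb{R}}^{alg}(B)$ is a retract, hence a direct factor, of $\pi_{\mathbb{R}}^{alg}(A)$, which gives the statement.

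Step 1: identify $\mathrm{Max}(A)$ with $\mathrm{Max}(B)$ through the homeomorphism $\varphi_{\ast}$ of Theorem \eqref{m8}. Since $I = \mathrm{N}(A)$ is fully invariant (nilpotents go to nilpotents), every $f \in E_{\mathbb{R}}(A)$ descends to $\bar f \in E_{\mathbb{R}}(B)$ with $\bar f \circ \varphi = \varphi \circ f$. Conversely, for $g \in E_{\mathbb{R}}(B)$, let $\tilde g \in E_{\mathbb{R}}(A)$ be the endomorphism acting by $g$ on coefficients and fixing each $x_\delta$. It is well defined because $x_\delta^{n_\delta}$ is preserved, and it satisfies $\overline{\tilde g} = g$.

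Step 2: check that the induced spectral maps correspond under $\varphi_{\ast}$. For $M \in \mathrm{Max}(B)$, the ideal $f^{-1}(\varphi^{-1}(M)) = \varphi^{-1}(\bar f^{-1}(M))$ is prime and contains $I$. So $\mu_f(\varphi_\ast M) = \varphi_\ast(\mu_{\bar f}(M))$, because the unique maximal ideal over $\varphi^{-1}(P)$ is $\varphi^{-1}$ of the one over $P$. Consequently, conjugation by the homeomorphism $\varphi_\ast$ carries $X_{\mathbb{R}}^{A}$ into $X_{\mathbb{R}}^{B}$ via $\mu_f \mapsto \mu_{\bar f}$. It also carries $X_{\mathbb{R}}^{B}$ into $X_{\mathbb{R}}^{A}$ via $\mu_g \mapsto \mu_{\tilde g}$. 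Both maps are restrictions of the conjugation homeomorphism $C(\mathrm{Max}(B),\mathrm{Max}(B)) \cong C(\mathrm{Max}(A),\mathrm{Max}(A))$ in the compact-open topology, since $[K,U]$ is sent to $[\varphi_\ast K, \varphi_\ast U]$. So they are continuous and base-point preserving. In fact the argument shows $X_{\mathbb{R}}^{A} \cong X_{\mathbb{R}}^{B}$ as pointed spaces, which would give $\pi_{\mathbb{R}}^{alg}(A) \cong \pi_{\mathbb{R}}^{alg}(B)$ outright.

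The main obstacle is the well-definedness check in Step 2. One must confirm that $\mu_f$ depends only on $\bar f$, so that distinct endomorphisms of $A$ with the same reduction give the same induced map. It then follows that both maps are single-valued and mutually inverse. This reduces to the displayed identity of preimages together with the fact that every prime of $A$ contains $I$. Once this is done, a nonzero class $[\gamma] \in \pi_{\mathbb{R}}^{alg}(B)$ is carried to a class in $\pi_{\mathbb{R}}^{alg}(A)$ whose image back in $\pi_{\mathbb{R}}^{alg}(B)$ is $[\gamma] \neq 0$. That class is therefore nonzero.
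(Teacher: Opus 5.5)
Your proof is correct, and it takes a genuinely different route from the paper.

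\textbf{The paper's route.} The paper keeps the inclusion $i\colon B\hookrightarrow A$. It shows $B$ is a retract, then proves $B$ is \emph{fully invariant} in $A$ by a minimal-degree argument. The lowest nilpotent component of $\sigma|_B$ would be a twisted derivation $D_m(bc)=\sigma_0(b)D_m(c)+\sigma_0(c)D_m(b)$ of $C(T)$, and this is killed using $\mathbb{R}$-linearity and cube roots in $C(T)$. Theorem~\ref{maharajji} then makes $i^{\ast}\colon\pi_{\mathbb{R}}^{alg}(A)\to\pi_{\mathbb{R}}^{alg}(B)$ surjective.

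\textbf{Your route.} You pass instead to the quotient $\varphi\colon A\to A/\mathrm{N}(A)\cong B$, whose kernel is automatically fully invariant. You lift each $g\in E_{\mathbb{R}}(B)$ by fixing the $x_\delta$. The identity $f^{-1}(\varphi^{-1}(M))=\varphi^{-1}(\bar f^{-1}(M))$ then gives $\mu_f=\varphi_\ast\circ\mu_{\bar f}\circ\varphi_\ast^{-1}$. So conjugation by $\varphi_\ast$ is a pointed homeomorphism $X_{\mathbb{R}}^{A}\cong X_{\mathbb{R}}^{B}$.

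\textbf{What each buys.} Your route yields more than the statement:
\begin{itemize}
\item You get $\pi_{\mathbb{R}}^{alg}(A)\cong\pi_{\mathbb{R}}^{alg}(B)$ outright. When $f(B)\subseteq B$ one has $\bar f=f|_B$, and $M\cap B=\varphi_\ast^{-1}(M)$ for $M\in\mathrm{Max}(A)$. So your map is exactly the map $\psi$ of Theorem~\ref{m4} applied to $i$, and $\ker(i^{\ast})=0$ in the paper's splitting.
\item You bypass both the derivation computation and the general Theorems~\ref{m4} and~\ref{maharajji}.
\item Nothing in your argument uses $\mathbb{R}$-linearity, so the same proof works for $k=\mathbb{Z}$.
\end{itemize}
The paper's route instead establishes the structural fact that $B$ is a fully invariant retract of $A$. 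That fits the example into its Section~4 framework, but here it gives only a surjection plus a splitting.

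\textbf{One correction.} Your claim that $B$ is not fully invariant in $A$ is false in $\lPM{\mathbb{R}}$. The paper's derivation argument shows that every $\mathbb{R}$-endomorphism of $A$ maps $B$ into $B$. The claim is true only for $k=\mathbb{Z}$. For example, take $\tau\circ r$ with $\tau(b)=b(p)+d(b(p))\,\bar x_\delta^{\,n_\delta-1}$, where $d$ is a nonzero derivation of $\mathbb{R}$ and $r$ kills the $x_\delta$. Since you never use the claim, your proof is unaffected, but the sentence should be removed.
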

\begin{proof}
There exists a unique morphism $\tilde{r}: B[x_\delta : \delta \in \Delta] \to B$ that extends the identity map on $B$ and satisfies $\tilde{r}(x_\delta) = 0$ for all $\delta \in \Delta$. Let $J = (x_{\delta}^{n_{\delta}}: \delta \in \Delta)$. For any $\delta \in \Delta$, we have $\tilde{r}(x_\delta^{n_\delta}) = (\tilde{r}(x_\delta))^{n_\delta} = 0$. Thus, $\tilde{r}(J) = \{0\}$. This shows that $\tilde{r}$ factors through the canonical projection $\pi: B[x_\delta : \delta \in \Delta] \to A$, inducing a well-defined morphism $r: A \to B$ such that $r \circ \pi = \tilde{r}$. For any $b \in B$, viewed as an element of $A$, we have $r(b) = \tilde{r}(b) = b$. Therefore, $r|_B$ is the identity map on $B$, proving that $B$ is a retract of $A$.

Let $\sigma: A \to A$ be an arbitrary morphism. Then $A$ possesses a natural grading by total degree, that is, 
$$
A = \bigoplus_{j = 0}^{\infty} A_{j},
$$
where $A_{0} = B$, and for $j \geq 1$, $A_{j}$ is the free $B$-module generated by the set
$$
M_{j} = \bigg\{ \prod \bar{x}_{\delta}^{\alpha_{\delta}} : \sum \alpha_{\delta} = j, \, 0 \leq \alpha_{\delta} < n_{\delta}\bigg\}.
$$
Assume, for the sake of contradiction, that $\sigma(B) \not\subseteq B$. This means there exists some $b \in B$ such that $\sigma(b)$ has a non-zero component in $A_{j}$ for some $j \geq 1$. Let $k \geq 1$ be the minimal integer such that there exists some $b \in B$ where the degree $k$ component of $\sigma(b)$ is non-zero. By the minimality of $k$, for any $c \in B$, the projection of $\sigma(c)$ onto $A_j$ is identically zero for all $0 < j < k$. Thus, for any $c \in B$, we can write $\sigma(c) = \sigma_{0}(c) + \sigma_{k}(c) + R(c)$, where $\sigma_{0}(c) \in B$, $\sigma_{k}(c) \in A_{k}$, and $R(c) \in \bigoplus_{j>k} A_{j}$. Since $\sigma$ is a morphism, for any $b, c \in B$, we have $\sigma(bc) = \sigma(b)\sigma(c)$. Expanding both sides yields the following:
\begin{equation*}
\sigma_{0}(bc) + \sigma_{k}(bc) + R(bc) = \big(\sigma_{0}(b) + \sigma_{k}(b) + R(b)\big)\big(\sigma_{0}(c) + \sigma_{k}(c) + R(c)\big).
\end{equation*}
Multiplying the right side and collecting terms by degree, we see the degree zero term is $\sigma_{0}(b)\sigma_{0}(c)$. As $\sigma(b + c) = \sigma(b) + \sigma(c)$, therefore we obtain that $\sigma_{0}(b + c) = \sigma_{0}(b) + \sigma_{0}(c)$. Hence, $\sigma_{0}: B \rightarrow B$ is a morphism. Further, the degree $k$ term is $\sigma_{0}(b)\sigma_{k}(c) + \sigma_{0}(c)\sigma_{k}(b)$. Equating the degree $k$ components from both sides of the equation gives:
\begin{equation} \label{eq:derivation}
\sigma_{k}(bc) = \sigma_{0}(b)\sigma_{k}(c) + \sigma_{0}(c)\sigma_{k}(b).
\end{equation}
Since $A_{k}$ is a free $B$-module with basis $M_{k}$, therefore $\sigma_{k}(b)$ can be expressed uniquely in terms of this basis $\sigma_{k}(b) = \sum_{m \in M_{k}} D_{m}(b) m$, where each coefficient map $D_{m}: B \to B$ inherits the identity from \eqref{eq:derivation}:
$$
D_{m}(bc) = \sigma_{0}(b)D_{m}(c) + \sigma_{0}(c)D_{m}(b).
$$
As $\sigma$ is $\mathbb{R}$-linear, we conclude that each coefficient map $D_{m}$ is $\mathbb{R}$-linear. The fact that $D_{m} = 0$ is a natural extension of the standard result that commutative rings in which elements have roots admit no non-trivial derivations. For the sake of completeness, we include a self-contained proof, which relies only on the basic algebraic properties of $B$.  

Since $D_{m}(1) = 0$, therefore $D_{m}(\beta) = 0$ for all $\beta \in \mathbb{R}$. Let $g \in B$ and let $y \in T$. Also, let $\sigma_{0}(g)(y) = t$ and let $F = g - t$. It is easy to see that $\sigma_{0}(F)(y) = 0$. If we take $G = F^{\frac{1}{3}}$ and $H = F^{\frac{2}{3}}$, then $F = GH$ with $\sigma_{0}(G)(y) = 0$ and $\sigma_{0}(H)(y) = 0$. As $D_{m}(F) = \sigma_{0}(G)D_{m}(H) + \sigma_{0}(H)D_{m}(G)$, we obtain that $D_{m}(F)(y) = 0$. Further, since $D_{m}(g) = D_{m}(F)$, we conclude that $D_{m}(g)(y) = 0$. Since $y \in T$ was arbitrary, we get that $D_{m} = 0$. Because $D_{m} = 0$ for all $m \in M_{k}$, we conclude that $\sigma_{k}(b) = 0$ for all $b \in B$. This contradicts the assumption that $k$ is the minimal positive integer for which $\sigma_{k}$ is non-zero. This shows that $\sigma(b) = \sigma_{0}(b) \in B$ for all $b \in B$. Hence, $B$ is fully invariant in $A$.

Now, Theorem \eqref{maharajji} implies that $i^{\ast}: \pi_{\mathbb{R}}^{alg}(A) \rightarrow \pi_{\mathbb{R}}^{alg}(B)$ is surjective, where $i: B \hookrightarrow A$ is the inclusion. Since $\pi_{\mathbb{R}}^{alg}(B) \neq 0$, we conclude that $\pi_{\mathbb{R}}^{alg}(A) \neq 0$.
\end{proof}

\bibliographystyle{amsplain}

\end{document}